\newtheorem{theorem}{Theorem}
\numberwithin{theorem}{section}
\numberwithin{conjecture}{section}
\newtheorem{corollary}{Corollary}
\numberwithin{corollary}{section}
\numberwithin{example}{section}
\newtheorem{lemma}{Lemma}
\numberwithin{lemma}{section}
\newtheorem{proposition}{Proposition}
\numberwithin{proposition}{section}
\numberwithin{problem}{section}
\numberwithin{remark}{section}
\newtheorem{claim}{Claim}
\numberwithin{claim}{theorem}
\numberwithin{definition}{section}
\begin{document}
\renewcommand\qedsymbol{$\blacksquare$}
\newcommand\subrel[2]{\mathrel{\mathop{#2}\limits_{#1}}}
\author{Hadeel Al Bazzal$^{1,2}$}
	
\footnotetext[1]{KALMA, Faculty of Sciences, Lebanese University, Baalbek, Lebanon.\\
(hadeel.albazzal@ul.edu.lb)}
\footnotetext[2]{LIB, Universit\'e Bourgogne Europe, Dijon, France.}

\title{On $S$-packing colorings of subcubic graphs}
\maketitle
\begin{abstract}

\noindent Given a sequence \( S = (s_1, s_2, \ldots, s_k) \) of positive integers satisfying \( s_1 \leq s_2 \leq \dots \leq s_k \), an \( S \)-packing coloring of a graph \( G \) is a partition of \( V(G) \) into \( k \) subsets \( V_1, V_2, \dots, V_k \) such that, for each \( 1 \leq i \leq k \), the distance between any two distinct vertices \( x, y \in V_i \) is at least \( s_i + 1 \). Yang and Wu established that every $3$-irregular subcubic graph admits a \( (1,1,3) \)-packing coloring. Later, Mortada and Togni  introduced the concept of  an \( i \)-saturated subcubic graph, defined as a subcubic graph in which every vertex of degree three has at most \( i \) neighbors of degree three  for \( 0 \leq i \leq 3 \). They further demonstrated that all $1$-saturated subcubic graphs are \( (1,1,2) \)-packing colorable. In this paper, we present new concise proofs of these results using a novel tool.
\end{abstract}
\textbf{Keywords:} $S$-packing coloring, subcubic graphs, $i$-saturated subcubic graphs, packing chromatic number.\\

\noindent \textbf{AMS Subject Classification:} 05C15
\section{Introduction}
\label{intro}
In this paper, we assume all  graphs are simple, meaning they have no loops and multiple  edges. For a graph $G$, we denote by $V(G)$ the set of vertices of $G$ and by $E(G)$ the set of  edges. We denote by \( g(G) \) the girth of \( G \), defined as the length of the shortest cycle in \( G \). For a vertex $v$ in $G$, we denote by $d_{G}(v)$ the number of neighbors of $v$ in $G$. For brevity, we refer to a vertex \( v \) with \( d_G(v) = i \) as an \( i \)-\textit{vertex} in \( G \). We denote by \(\Delta(G)\) the maximum degree of $G$. The distance between two vertices $u$ and $v$ in $G$, denoted by $d_G(u, v)$, is the length of the shortest path between $u$ and $v$ in $G$. For a set $M\subseteq E(G)$, $M$ is said to be a matching in $G$ if no two edges in $M$ share a common vertex. For a set $H\subseteq V(G)$, $H$ is called an independent set in $G$ if no two vertices in $H$ are adjacent. We denote by $G[H]$ the subgraph induced by $H$. A spanning subgraph of a graph $G$ is a subgraph $H$ of $G$ such that $V(H)=V(G)$. A bipartite graph $B$, denoted by $B=X\cup Y$, is a graph where $V(B)$ can be partitioned into two subsets $X$ and $Y$, called partite sets, such that every edge of $B$ joins a vertex of $X$ and a vertex of $Y$.
  
A subcubic graph is a graph in which each vertex has at most three neighbors. Mortada and Togni \cite{MortadaTogni20} introduced the concept of an \( i \)-saturated subcubic graph, defined as a subcubic graph in which every vertex of degree three has at most \( i \) neighbors of degree three, where \( 0 \leq i \leq 3 \). Note that a $3$-irregular subcubic graph is the same as a $0$-saturated subcubic graph.

For a sequence \( S = (s_1, s_2, \ldots, s_k) \) of positive integers satisfying \( s_1 \leq s_2 \leq \dots \leq s_k \), an \( S \)-packing coloring of a graph \( G \) is a partition of \( V(G) \) into \( k \) subsets \( V_1, V_2, \dots, V_k \) such that, for each \( 1 \leq i \leq k \), the distance between any two distinct vertices \( x, y \in V_i \) is at least \( s_i + 1 \). The smallest integer \( k \) for which \( G \) admits a \( (1, 2, \dots, k) \)-packing coloring ($k$-packing coloring) is called the packing chromatic number of \( G \) and is denoted by \( \chi_\rho(G) \). This concept was introduced by Goddard, Hedetniemi, Hedetniemi, Harris, and Rall~\cite{GoddardHedetniemiHarrisRall08} in 2008 under the name broadcast chromatic number. For a graph \( G \), the subdivided graph of \( G \), denoted by \( S(G) \), is obtained by replacing each edge of \( G \) with a path of length two.  

Numerous studies have focused on establishing bounds for \( \chi_\rho(G) \) and \( \chi_\rho(S(G)) \) (see \cite{BaloghKostochkaLiu1, BresarFerme03, BresarKlavzarRallWash05, BresarKlavzarRallWash06, GastineauTogni12}). In particular, Gastineau and Togni~\cite{GastineauTogni12} posed the question of whether the inequality \( \chi_\rho(S(G)) \leq 5 \) holds for any subcubic graph \( G \), a conjecture later formalized by Bre\v{s}ar, Klav\v{z}ar, Rall, and Wash~\cite{BresarKlavzarRallWash05}. Gastineau and Togni~\cite{GastineauTogni12} demonstrated that for a subcubic graph \( G \) to satisfy \( \chi_\rho(S(G)) \leq 5 \), it is sufficient for \(G\) to be \((1,1,2,2)\)-packing colorable. Several papers ~\cite{LiuLiuRolekYu17, MortadaTogni20} have verified that certain subclasses of subcubic graphs are \((1,1,2,2)\)-packing colorable. More recently, Bre{\v s}ar, Kuenzel, and Rall \cite{Bresar1} proved that every claw-free cubic graph is \( (1,1,2,2) \)-packing colorable.  

 Many other $S$-packing colorings have been studied. In particular,  Gastineau and Togni~\cite{GastineauTogni12} proved that every subcubic graph is $(1, 1, 2, 2, 2)$-packing colorable. Tarhini and Togni \cite{i} proved that every cubic Halin graph is $(1, 1, 2, 3)$-packing colorable. Mortada and Togni~\cite{2-saturated 2, i-saturated} established many results regarding $i$-saturated subcubic graphs. Recently, Liu, Zhang, and Zhang \cite{LiuZhang} proved that every subcubic graph is (1, 1, 2, 2, 3)-packing colorable, and hence that the packing chromatic number of the subdivided graph of any subcubic graph is at most six.

 Yang and Wu \cite{a} answered a question posed by Gastineau and Togni \cite{GastineauTogni12}, confirming that every $0$-saturated subcubic graph is \( (1,1,3) \)-packing colorable. This result has improved the earlier work of Gastineau and Togni \cite{GastineauTogni12}, who had shown that every $0$-saturated subcubic graph is \( (1,1,2) \)-packing colorable. More precisely, Yang and Wu \cite{a} demonstrated that every $0$-saturated subcubic graph admits a good \( (1,1,3) \)-packing coloring, where a \( (1,1,3) \)-packing coloring of \( G \) is said to be a good \( (1,1,3) \)-packing coloring if no connected component of  the subgraph induced by the vertices of degree two in $G$ is isomorphic to a path of length two, \( P = uvw \), with the vertex \( v \) colored by the color three. To prove this result, Yang and Wu considered the $0$-saturated subcubic graph of minimum order that does not admit such a coloring. Then, they  proved that every vertex of degree two is adjacent to at most one vertex of degree two, \( G \) is 2-connected, and \( g(G) \) is at least $3$, then at least $4$, $5$, $6$, and finally at least $7$. These results allowed the authors to define the distance between edges whose ends are both of degree two in \( G \). Ultimately, they proved the existence of a good \( (1,1,3) \)-packing coloring for \( G \). Mortada and Togni~\cite{MortadaTogni20} demonstrated that all $1$-saturated subcubic graphs are \((1,1,2)\)-packing colorable. Their proof technique involves considering a $1$-saturated subcubic graph \( G \) of minimum order that does not admit such a coloring. Then, they select an independent set in  \( G \) that maximizes, among all independent sets, a linear combination of the number of vertices of degree three and the number of vertices of degree two. By analyzing such an independent set, they were able to determine the distance between a number of vertices in \( G \) and establish the existence of a \( (1,1,2) \)-packing coloring for \( G \). Using the same method, Mortada \cite{Mortada} provided a simpler proof for the result established by Yang and Wu \cite{a} concerning the $0$-saturated subcubic graphs.

 In this paper, we present concise proofs that every $0$-saturated subcubic graph is \((1,1,3)\)-packing colorable and that every $1$-saturated subcubic graph is \((1,1,2)\)-packing colorable. Our approach depends on the spanning bipartite subgraph \( B \) of \( G \) with the maximum number of edges. This subgraph effectively organizes the structure of the subcubic graph, facilitating the partitioning of \( V(G) \) and enabling the achievement of the desired colorings. Notably, this technique is introduced for the first time in the context of $S$-packing colorings of subcubic graphs.

\section{About $0$-Saturated and $1$-Saturated Subcubic Graphs}
\label{sec:1}

\noindent Let us begin by introducing a key property of the spanning bipartite subgraph of a graph \( G \) with the maximum number of edges.

\begin{proposition}\label{p1}
  Let \( G \) be a graph. Let \( B = X \cup Y \) be a spanning bipartite subgraph of \( G \) with the maximum number of edges. Then \( d_B(v) \geq \frac{1}{2} d_G(v) \) for any \( v \in G \).
\end{proposition}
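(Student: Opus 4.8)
The plan is to argue by contradiction via a local switching argument (this is the classical maximum-cut trick). First I would record the following immediate consequence of maximality: if $B = X \cup Y$ has the maximum number of edges among all spanning bipartite subgraphs of $G$, then $B$ must contain \emph{every} edge of $G$ with one endpoint in $X$ and the other in $Y$, since otherwise adding such a missing edge would keep the graph bipartite with respect to the same partition $X \cup Y$ while increasing the edge count. In particular, for each $v \in V(G)$ the quantity $d_B(v)$ equals the number of neighbors of $v$ in $G$ lying on the \emph{opposite} side of the partition from $v$.

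Next, suppose for contradiction that some vertex $v$ satisfies $d_B(v) < \frac{1}{2} d_G(v)$, and say $v \in X$. By the observation above, $v$ has exactly $d_B(v)$ neighbors in $G$ inside $Y$, hence exactly $d_G(v) - d_B(v)$ neighbors inside $X$; the hypothesis then gives $d_G(v) - d_B(v) > \frac{1}{2} d_G(v) > d_B(v)$, so $v$ has strictly more neighbors in its own part than in the other part.

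Now I would perform the switch: move $v$ from $X$ to $Y$, i.e. pass to the bipartition $X' = X \setminus \{v\}$, $Y' = Y \cup \{v\}$, and let $B'$ be the spanning subgraph consisting of all edges of $B$ not incident to $v$, together with all edges of $G$ joining $v$ to $X \setminus \{v\}$. Each retained edge of $B$ joins $X$ to $Y$ and hence joins $X'$ to $Y'$; each newly added edge joins $v \in Y'$ to a vertex of $X'$; so $B'$ is bipartite with respect to $X' \cup Y'$. Counting edges, $|E(B')| = |E(B)| - d_B(v) + \bigl(d_G(v) - d_B(v)\bigr) = |E(B)| + d_G(v) - 2 d_B(v) > |E(B)|$, contradicting the maximality of $B$. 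Therefore $d_B(v) \geq \frac{1}{2} d_G(v)$ for every $v \in V(G)$.

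The whole argument is short; the only point requiring care is the bookkeeping at the switch — confirming that the edges added from $v$ are precisely its $G$-neighbors on its former side, that these were genuinely absent from $B$ (they were, since both their endpoints lay in $X$), and that no other edge of $B$ is disturbed. I do not anticipate a real obstacle: essentially all the content sits in the initial maximality observation, and the switch is a one-line edge count.
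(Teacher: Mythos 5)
Your proposal is correct and follows essentially the same switching argument as the paper: move the offending vertex to the other side and count $|E(B')| = |E(B)| + d_G(v) - 2d_B(v) > |E(B)|$. Your explicit preliminary observation that a maximum spanning bipartite subgraph contains every $X$--$Y$ edge of $G$ is used implicitly in the paper (via the identity $d_{G[X]}(x) = d_G(x) - d_B(x)$), so the two proofs coincide in substance.
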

\begin{proof}
 Suppose, to the contrary, that there exists a vertex \( x \in G \) such that \( d_B(x) < \frac{1}{2} d_G(x) \). Without loss of generality, suppose that \( x \in X \). Now,  define \( B' = (X \setminus \{x\}) \cup (Y \cup \{x\}) \) as a spanning bipartite subgraph of \( G \), where $E(B')$ consists of all edges of \( G \) that have one endpoint in \( X' \) and the other in \( Y' \). Thus, we have \( |E(B')| = |E(B)| + d_{G[X]}(x) - d_B(x) \). Since \(  d_{G[X]}(x)=d_G(x)- d_B(x) \), \( |E(B')| = |E(B)| + d_G(x) - 2d_B(x) \). Therefore, \( |E(B')| > |E(B)|  \) as \( d_B(x) < \frac{1}{2} d_G(x) \), a contradiction.\end{proof}

\noindent By specializing the graph in Proposition \ref{p1} to a subcubic graph, we obtain the following corollary.

\begin{corollary}\label{p2}
  Let \( G \) be a subcubic graph. Let \( B = X \cup Y \) be a spanning bipartite subgraph of \( G \) with the maximum number of edges. Then \( \Delta(G[X]) \leq 1 \) and \( \Delta(G[Y]) \leq 1 \).
\end{corollary}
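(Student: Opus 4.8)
The plan is to read this off directly from Proposition~\ref{p1}, with one extra integrality observation that is where subcubicity gets used. Fix an arbitrary vertex $v$ of $G$; by symmetry between the two partite sets it suffices to treat the case $v \in X$. Since $B$ is obtained from $G$ by deleting exactly the edges of $G[X]$ together with the edges of $G[Y]$, each neighbor of $v$ in $G$ is either joined to $v$ by an edge surviving in $B$ or by an edge of $G[X]$, and these two possibilities are mutually exclusive; hence $d_G(v) = d_B(v) + d_{G[X]}(v)$ (this is exactly the identity $d_{G[X]}(x) = d_G(x) - d_B(x)$ already exploited in the proof of Proposition~\ref{p1}).

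Next I would invoke Proposition~\ref{p1}, which yields $d_B(v) \geq \tfrac12 d_G(v)$, and therefore $d_{G[X]}(v) = d_G(v) - d_B(v) \leq d_G(v) - \tfrac12 d_G(v) = \tfrac12 d_G(v)$. Because $G$ is subcubic we have $d_G(v) \leq 3$, so $d_{G[X]}(v) \leq \tfrac32$; since $d_{G[X]}(v)$ is a non-negative integer, this forces $d_{G[X]}(v) \leq 1$. As $v$ was arbitrary, $\Delta(G[X]) \leq 1$, and repeating the argument with the roles of $X$ and $Y$ interchanged gives $\Delta(G[Y]) \leq 1$.

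I do not anticipate any genuine obstacle: the only point requiring a little care is the degree bookkeeping at $v$, namely that its $G$-neighbors split cleanly into those reached by $B$-edges and those reached by edges of $G[X]$ with no overlap, which is immediate from the definition of $B$. The essential role of the hypothesis ``subcubic'' is confined to the final rounding step, where $\tfrac32$ is rounded down to $1$; for a graph of larger maximum degree the identical computation would only give $\Delta(G[X]) \leq \lfloor \Delta(G)/2\rfloor$.
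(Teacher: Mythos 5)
Your proposal is correct and follows exactly the paper's own argument: apply the identity $d_{G[X]}(v)=d_G(v)-d_B(v)$, invoke Proposition~\ref{p1} to get $d_{G[X]}(v)\leq\tfrac12 d_G(v)\leq\tfrac32$, and round down by integrality. The paper merely leaves the final integrality step implicit, which you spell out.
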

\begin{proof}
 Let \( x \in X \). We have \( d_{G[X]}(x)= d_G(x) - d_B(x) \). By Proposition \ref{p1}, we get \( d_{G[X]}(x) \leq \frac{1}{2} d_G(x) \). Since \( G \) is a subcubic graph, \( d_{G[X]}(x) \leq 1 \). Similarly, we can show that \( \Delta(G[Y]) \leq 1 \).\end{proof}

\noindent For any spanning bipartite subgraph $B=X \cup Y$ of a subcubic graph \textit{G} with the maximum number of edges, let $E_X$ (respectively $E_Y$) denote the set of edges of $G[X]$ (respectively $G[Y]$). By Corollary \ref{p2}, each \( E_X \) and \( E_Y \) is a  matching in \( G \). We call each end of an edge in $E_X \cup E_Y$ a bad vertex. By Proposition \ref{p1}, each bad vertex has degree at least two in $G$. Two bad vertices in $G$ are said to be \textit{unlinked} if they belong to distinct edges in \( E_X \cup E_Y \).\\\

\noindent Our approach establishing that a subclass of a subcubic graph \( G \) is \( (1,1,k) \)-packing colorable follows this strategy. We start by selecting a spanning bipartite subgraph \( B = X \cup Y \) of \( G \) with the maximum number of edges. Then, we color exactly one end of each edge in \( E_X \cup E_Y \) with \( k \). By Corollary \ref{p2}, and since each edge has a colored end, the subgraph of $G$ induced by the uncolored vertices is a bipartite graph. This establishes that \( G \) is \( (1,1,k) \)-packing colorable.\\

\noindent We will now present two lemmas that are essential for proving the main theorems of this paper.

\begin{lemma}\label{l1}
   Let \( G \) be a subcubic graph, and let \( B = X \cup Y \) be a spanning bipartite subgraph of \( G \) with the maximum number of edges. If \( x \in X\) and \( y \in Y\) are two adjacent bad vertices, then both $x$ and $y$ are $3$-vertices in $G$.
\end{lemma}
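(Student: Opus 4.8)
Let me think about what we need. We have $x \in X$ and $y \in Y$ adjacent, both bad vertices. "Bad vertex" means $x$ is an endpoint of an edge in $E_X$ (i.e., $x$ has a neighbor $x' \in X$), and $y$ is an endpoint of an edge in $E_Y$ (i.e., $y$ has a neighbor $y' \in Y$). We want to show $d_G(x) = d_G(y) = 3$.

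By Proposition \ref{p1}, each bad vertex has degree $\geq 2$. So the only alternative is that one of them, say $x$, has degree exactly 2. Then $d_B(x) = d_G(x) - d_{G[X]}(x) = 2 - 1 = 1$. So in $B$, $x$'s only neighbor is $y$. Now I want to derive a contradiction with maximality of $|E(B)|$ by moving $x$ from $X$ to $Y$... but wait, that would just recover the bound from Proposition \ref{p1} with equality, no contradiction directly. So I need a cleverer swap.

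**The swap to try.** Move $x$ from $X$ to $Y$ AND simultaneously move $y$ from $Y$ to $X$. Actually, let me reconsider. The natural move: consider moving $x$ to $Y$. Then $x$'s edge to $x'$ becomes a $B$-edge (+1), $x$'s edge to $y$ becomes an $E_Y$-type edge (−1). Net zero, so $B' = (X\setminus\{x\})\cup(Y\cup\{x\})$ also has maximum edges. Now in $B'$, the vertex $y$ has the neighbor $y' \in Y$ still on the same side, and now also $x$ on the same side — so $d_{G[Y\cup\{x\}]}(y) \geq 2$ if $x \neq y'$, contradicting Corollary \ref{p2} applied to $B'$ (which is also a max spanning bipartite subgraph). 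If $x = y'$, then... $y' = x \in X$ originally, contradicting $y' \in Y$. So we get the contradiction, provided $x \neq y'$, which holds since $y' \in Y$ and $x \in X$.

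So the proof outline is: (1) Suppose not; WLOG $d_G(x) = 2$. (2) Since $x$ is bad, it has a neighbor $x' \in X$, hence $d_{G[X]}(x) = 1$ and $d_B(x) = 1$, so the unique $B$-neighbor of $x$ is $y$. (3) Form $B' = (X \setminus \{x\}) \cup (Y \cup \{x\})$; compute $|E(B')| = |E(B)| + d_G(x) - 2d_B(x) = |E(B)| + 2 - 2 = |E(B)|$, so $B'$ is also a spanning bipartite subgraph of maximum size. (4) Apply Corollary \ref{p2} to $B'$: in $G[Y \cup \{x\}]$, the vertex $y$ is adjacent to both $x$ (just moved in) and its original $E_Y$-partner $y' \in Y \subseteq Y \cup \{x\}$, with $x \neq y'$ since $x \in X$, $y' \in Y$; hence $d_{G[Y\cup\{x\}]}(y) \geq 2$, contradicting $\Delta(G[Y\cup\{x\}]) \leq 1$. (5) By symmetry the same argument rules out $d_G(y) = 2$, so $d_G(x) = d_G(y) = 3$.

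**Expected main obstacle.** There is essentially no deep obstacle here — it is a short extremality argument. The one point requiring a little care is making sure the swap $B'$ is genuinely a \emph{maximum} spanning bipartite subgraph (not merely one whose edge count we can bound), so that Corollary \ref{p2} is applicable to it; this follows from the exact edge-count identity $|E(B')| = |E(B)| + d_G(x) - 2d_B(x)$ already established in the proof of Proposition \ref{p1}. The only other thing to verify is that $y'$, the $E_Y$-partner of $y$, is distinct from $x$ and stays on the same side as $y$ after the swap, which is immediate since $y' \in Y$ and $x \in X$.
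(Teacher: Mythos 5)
Your proof is correct. The paper's own proof is also a local exchange argument, but the mechanics differ: assuming without loss of generality that $d_G(y)=2$, the paper swaps \emph{both} endpoints simultaneously, setting $X'=(X\setminus\{x\})\cup\{y\}$ and $Y'=(Y\setminus\{y\})\cup\{x\}$, and shows by a direct edge count that $|E(B')|>|E(B)|$, contradicting maximality outright. You instead move only the degree-two vertex across the bipartition, observe that the resulting subgraph is still of maximum size (via the identity $|E(B')|=|E(B)|+d_G(x)-2d_B(x)=|E(B)|$), and then obtain the contradiction from Corollary~\ref{p2} applied to the new subgraph, since $y$ now has the two same-side neighbours $x$ and $y'$ (distinct because $x\in X$ and $y'\in Y$). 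Both routes are sound: the paper's two-vertex swap is self-contained at the level of edge counting and yields a strict inequality immediately, while your one-vertex move is a smaller perturbation that leans on the corollary --- and in fact your pattern of ``perturb to another maximum bipartite subgraph, then invoke an earlier structural result'' is exactly the device the paper uses later, in the second part of Lemma~\ref{l2} and inside Claim~\ref{c1}. A minor remark: your step (2) pins down that $y$ is the unique $B$-neighbour of $x$, but the argument does not actually need this uniqueness, only that $d_B(x)=1$ so the move preserves the edge count.
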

\begin{proof}
    Suppose, to the contrary, that either $x$ or $y$ is a $2$-vertex in $G$. Without loss of generality, assume that $y$ is a $2$-vertex in $G$. Then, define $B'=X'\cup Y'$ to be a spanning bipartite subgraph of $G$, where $X'=(X\setminus\{x\})\cup \{y\}$ and $Y'=(Y\setminus\{y\})\cup \{x\}$. The edge set $E(B')$ consists of all edges of \( G \) that have one endpoint in \( X' \) and the other in \( Y' \). Thus, we have $|E(B')| = |E(B)|+d_{G[X]}(x)+d_{G[Y]}(y)-d_{G[Y']}(x)-d_{G[X']}(y)$. Clearly, $d_{G[X]}(x)=d_{G[Y]}(y)=1$. Since $x$ has two neighbors in $X'$ and \( G \) is a subcubic graph, $d_{G[Y']}(x)\leq 1$. Additionally, $d_{G[X']}(y)=0$ because $y$ is a $2$-vertex in $G$, and $y$ already has two neighbors in $Y'$. Thus \( |E(B')| > |E(B)| \),  a contradiction.
\end{proof}

 \begin{lemma}\label{l2}
Let \( G \) be a subcubic graph, and let \( B = X \cup Y \) be a spanning bipartite subgraph of \( G \) with the maximum number of edges. Suppose \( u, v \in V(G) \) are bad unlinked vertices belonging to the same partite set of \( B \) and having a common neighbor \( z \in V(G) \). Then, \( u \) or \( v \) is a $3$-vertex in \( G \). Furthermore, if either \( u \) or \( v \) is a $2$-vertex in \( G \), then \( z \) must be a $3$-vertex in \( G \). \end{lemma}
\begin{proof}
  Let \( u \) and \( v \) be two bad unlinked $2$-vertices in \( G \). Then, there exist  \( u_1,v_1\in V(G) \) such that $uu_1,vv_1 \in E_X\cup E_Y$. Suppose, without loss of generality, that \( u, v \in X \) and that they have a common neighbor \( z \). By Corollary \ref{p2}, it follows that \( z \in Y \).
 
  \noindent For the first part of this lemma, suppose to the contrary that both $u$ and $v$ are $2$-vertices in $G$. Define \( B' = X' \cup Y' \) to be a spanning bipartite subgraph of \( G \), where \( X' = (X\setminus \{u, v\}) \cup \{z\} \) and \( Y' = (Y \setminus \{z\})\cup \{u,v\}\). The edge set $E(B')$ consists of all edges of \( G \) that have one endpoint in \( X' \) and the other in \( Y' \). Thus, we have $ |E(B')| = |E(B)| + d_{G[Y]}(z) + d_{G[X]}(u) + d_{G[X]}(v) - d_{G[X']}(z) - d_{G[Y']}(u)- d_{G[Y']}(v)$. As $u$ and $v$ are bad vertices in $G$, $d_{G[X]}(u)=d_{G[X]}(v)=1$. Since $z$ has two neighbors in $Y'$ and \( G \) is a subcubic graph,  we have $d_{G[X']}(z)\leq 1$. Moreover, $d_{G[Y']}(u)=d_{G[Y']}(v)=0$ because both $u$ and $v$ are $2$-vertices in $G$ and already have two neighbors in $X'$. Thus \( |E(B')| > |E(B)| \),  a contradiction.
  
 \noindent For the second part of this lemma, assume without loss of generality that $u$ is a $2$-vertex in $G$ and suppose, for the sake of contradiction, that $z$ is also a $2$-vertex in $G$. Now, define $B'=X'\cup Y'$ as a spanning bipartite subgraph of $G$, where $X'=X\setminus\{u\}$ and $Y'=Y\cup \{u\}$. Then, we obtain $|E(B')| = |(E(B)\setminus \{uz\})\cup\{uu_1\}|=|E(B)|$. Therefore, \( B' \) is a spanning bipartite subgraph of \( G \) with the maximum number of edges. However, under this new bipartition, \( v \in X' \) and \( z \in Y' \) are two adjacent bad vertices, where $z$ is a $2$-vertex in $G$, a contradiction by Lemma \ref{l1}.\end{proof}

\noindent The following lemma establishes the distance between any two bad unlinked $2$-vertices in \( G \).

\begin{lemma}\label{l3}
Let \( G \) be a subcubic graph, and let \( B = X \cup Y \) be a spanning bipartite subgraph of \( G \) with the maximum number of edges. Then, the distance between any two bad unlinked $2$-vertices in \( G \) is at least 3.
 \end{lemma}
\begin{proof}
  Let \( u \) and \( v \) be two bad unlinked $2$-vertices in \( G \). Then, there exist  \( u_1,v_1\in V(G) \) such that $uu_1,vv_1 \in E_X\cup E_Y$. First, suppose that \( u \) and \( v \) belong to the same partite set of \( B \), either \( X \) or \( Y \). As \( u \) and \( v \) are unlinked, \( d_G(u, v)> 1 \). Furthermore, by Corollary \ref{p2} and by Lemma~\ref{l2}, \( u \) and \( v \) cannot share a common neighbor, which implies that \( d_G(u, v) \geq 3 \). Next, consider the case where \( u \) and \( v \) belong to distinct partite sets of \( B \). By Lemma~\ref{l1}, we have \( uv, uv_1, u_1v \notin E(G) \). As a result, \( d_G(u, v) \geq 3 \).
 \end{proof}

\noindent We are now ready to present the proofs of our main theorems.

\begin{theorem}\label{t3}
    Every $0$-saturated subcubic graph is $(1,1,3)$-packing colorable.
\end{theorem}
\begin{proof}
    Let \textit{G} be a $0$-saturated subcubic graph. Let \( B = X \cup Y \) be a spanning bipartite subgraph of \textit{G} with the maximum number of edges. By Corollary \ref{p2}, \( E_X \) and \( E_Y \) are two matchings in \( G \). By Proposition \ref{p1}, each bad vertex has degree at least two in \( G \). Therefore, since \( G \) is a $0$-saturated subcubic graph, at least one end of each edge in \( E_X \cup E_Y \) must be a $2$-vertex in \( G \). 
    \begin{claim}\label{c3}
        The distance between any two bad unlinked $2$-vertices in $G$ is at least 4.
    \end{claim}
    \noindent \textit{Proof.}
          Let \( u \) and \( v \) be two bad unlinked $2$-vertices in \( G \). Then, there exist \( u_1,v_1\in V(G) \) such that $uu_1,vv_1 \in E_X\cup E_Y$. By Lemma \ref{l3}, \( d_G(u, v) \geq 3 \). Suppose that \( d_G(u, v) = 3 \).
         
         \noindent First, assume that \( u \) and \( v \) are in the same partite set. Then, either \( u \) and \( v_1 \) or \( u_1 \) and \( v \) have a common neighbor in \( G \). Without loss of generality, suppose that \( u \) and \( v_1 \) have a common neighbor \( z \in V(G) \). By Corollary \ref{p2}, \( z \in Y\). By Lemma \ref{l2}, and since \( u \) is a $2$-vertex in \( G \), both \( v_1 \) and \( z \) are $3$-vertices in \( G \), which is a contradiction because \( G \) is a $0$-saturated subcubic graph.

         \noindent Now, consider the case where \( u \) and \( v \) are in distinct partite sets. Without loss of generality, assume that \( u \in X \) and \( v \in Y \). By Lemma \ref{l1}, and since \( G \) is a $0$-saturated subcubic graph, \( u_1v_1 \notin E(G) \). Then, there exist \( x \in X \) and \( y \in Y \) such that \( uy, yx, xv \in E(G) \).  Now, define \( B' = X' \cup Y' \) to be a spanning bipartite subgraph of $G$, where \( X' = X \cup \{v\} \) and \( Y' = Y \setminus \{v\} \). We have $|E(B')| = |(E(B)\setminus \{xv\})\cup \{vv_1\}|= |E(B)|$. Then, \( B' \) is a spanning bipartite subgraph of \( G \) with the maximum number of edges. In this new bipartition, the two bad unlinked vertices \( u, x \in X' \) have a common neighbor \( y \in Y' \). Then, by Lemma \ref{l2}, and since \( u \) is a $2$-vertex in \( G \), it follows that both \( x \) and \( y \) must be $3$-vertices in \( G \), a contradiction as \( G \) is a $0$-saturated subcubic graph.\hfill $\Box$\par
        \noindent Now, define \( T \subseteq V(G) \) as a set that contains one bad $2$-vertex of each edge in \( E_X \cup E_Y \). By Claim \ref{c3}, we can color each vertex in \( T \) with 3. Moreover, by Corollary \ref{p2} and the construction of \( T \), the subgraph of \( G \) induced by the remaining uncolored vertices is a bipartite graph. This implies that \( G \) is \( (1,1,3) \)-packing colorable. \end{proof}

\begin{theorem}\label{t2}
    Every $1$-saturated subcubic graph is $(1,1,2)$-packing colorable.
\end{theorem}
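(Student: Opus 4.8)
The plan is to reproduce the strategy of Theorem~\ref{t1}. Fix a maximum spanning bipartite subgraph $B = X \cup Y$ of $G$, let $E_X, E_Y$ be the (by Corollary~\ref{p2}, matching) sets of bad edges, and set $M = E_X \cup E_Y$; since no vertex of $X$ can lie in an edge of $E_Y$ and no vertex of $Y$ in an edge of $E_X$, the set $M$ is itself a matching of $G$, and every bad vertex has degree at least $2$. I want a set $T \subseteq V(G)$ for which $V_1 = X \setminus T$, $V_2 = Y \setminus T$, $V_3 = T$ is a $(1,1,2)$-packing coloring: $V_1$ and $V_2$ must be independent, i.e.\ $G - T$ must be bipartite, and any two vertices of $T$ must be at distance at least $3$ in $G$ (one less than the distance $4$ needed in Theorem~\ref{t1}, since colour $3$ now carries weight $2$).

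First I would extract the local structure forced by maximality and $1$-saturation. From Lemma~\ref{l1} together with $1$-saturation one gets: if an edge $uu' \in E_X$ has $d_G(u) = d_G(u') = 3$, then every neighbour of $u$ other than $u'$, and every neighbour of $u'$ other than $u$, has degree $2$ and is not bad (a bad degree-$2$ neighbour in $Y$ would, being adjacent to the bad vertex $u \in X$, be forced to degree $3$ by Lemma~\ref{l1}). Lemma~\ref{l2} controls non-adjacent bad vertices sharing a neighbour, and any remaining fact I would derive exactly as in Claim~\ref{c1}, by moving one or two vertices across $(X,Y)$ and contradicting the maximality of $|E(B)|$. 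Unlike the $3$-irregular case, two bad vertices in opposite parts of $B$ may now be adjacent (Lemma~\ref{l1} only forces both to have degree $3$), so there is no analogue of the fact $(\ast)$ of Theorem~\ref{t1}; the weaker distance target has to absorb this loss. I would then build $T$ by selecting, for every edge of $M$ that has a degree-$2$ endpoint, one such endpoint; for such a $T$ the graph $G - T$ is a subgraph of $B$ with some vertices deleted, hence bipartite, and I would prove the key claim that $d_G(u,v) \geq 3$ for all $u,v \in T$ by casing on whether $u$ and $v$ lie in the same part of $B$ or in opposite parts, and on their degrees, using the matching property of $M$ to rule out adjacency and Lemmas~\ref{l1}--\ref{l2} (plus ad hoc swaps) to rule out a common neighbour. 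Then $V_1 = X \setminus T$, $V_2 = Y \setminus T$, $V_3 = T$ is the desired coloring.

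The step I expect to be the real obstacle is an edge of $M$ with \emph{both} ends of degree $3$: such an edge offers no degree-$2$ endpoint to put in $T$, and simply forcing one of its degree-$3$ endpoints into $T$ can break the distance condition --- already the cycle $C_8$ with its two long diagonals is $1$-saturated subcubic and has a maximum spanning bipartite subgraph whose only bad edges are those two ``heavy'' diagonals, and for it no endpoint-based choice of $T$ works (one must instead place in $T$ the two common neighbours of the diagonal ends). To handle this I would either refine the choice of $B$ --- among maximum spanning bipartite subgraphs pick one minimizing the number of bad edges with both ends of degree $3$, and then show (by a re-routing argument, which is the delicate point, since local swaps need not suffice) that there are none, reducing to the case above --- or exploit directly the restricted neighbourhood of a heavy edge, namely that all non-partner neighbours of its ends have degree $2$ and are not bad, hence are not themselves candidates for $T$, in order to choose which nearby vertices enter $T$ so that both $G - T$ bipartite and the distance-$3$ condition hold. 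Once $T$ is in place and the distance claim is verified, the partition $V_1 = X \setminus T$, $V_2 = Y \setminus T$, $V_3 = T$ completes the proof.
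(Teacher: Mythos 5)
Your framework is exactly the paper's: take a maximum spanning bipartite subgraph $B=X\cup Y$, build $T$ from endpoints of the bad edges, and output $X\setminus T$, $Y\setminus T$, $T$. You also correctly isolate the real difficulty (bad edges with both ends of degree $3$, for which the $C_8$-plus-diagonals example is a legitimate witness that a naive endpoint choice fails), and your first proposed fix --- a secondary extremal choice of $B$ among all maximum spanning bipartite subgraphs --- is precisely the paper's move: since $|E_X\cup E_Y|$ is the same for every such $B$, minimizing the number of ``heavy'' bad edges is equivalent to the paper's maximization of $|E_1(B)|$, the set of bad edges with a degree-$2$ end. Up to that point you have reconstructed the proof's architecture.

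The gap is that you stop exactly where the actual work begins, and both of your proposed continuations point away from how it is done. You aim to show that after the secondary optimization there are \emph{no} heavy bad edges; the paper never proves this (and does not need to). Instead it keeps $E_2(B)$ possibly nonempty, places one degree-$3$ end of each heavy edge into $T$, and proves $d_G(u,v)\ge 3$ directly. The only hard case is $u,v\in T$ in the same part with a common neighbour $y$: Lemma~\ref{l2}, $1$-saturation, and the definition of $T$ force $d_G(u)=d_G(v)=d_G(u')=d_G(v')=3$, hence the remaining neighbours $y,y_1,y_2$ of $u,v$ all have degree $2$ and are not bad; the swap $X'=(X\setminus\{u,v\})\cup\{y\}$, $Y'=(Y\setminus\{y\})\cup\{u,v\}$ then preserves $|E(B)|$ while turning $uu',vv'\in E_2(B)$ into non-bad edges and creating $uy_1,vy_2\in E_1(B')$, contradicting the maximality of $|E_1(B)|$. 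So the resolution \emph{is} a local swap --- contrary to your worry that ``local swaps need not suffice'' --- but it is played against the secondary objective $|E_1(B)|$ rather than against $|E(B)|$. Your alternative fix (putting non-endpoints such as common neighbours into $T$) abandons the guarantee that $G-T$ is a subgraph of $B$, so $X\setminus T$ and $Y\setminus T$ need no longer be independent; it happens to be repairable in the $C_8$ example only because deleting those two vertices accidentally leaves a bipartite graph, and no general argument is offered. As written, the proposal is an accurate plan with the decisive lemma missing.
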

\begin{proof}
     Let \textit{G} be a $1$-saturated subcubic graph. Consider a spanning bipartite subgraph $B = X \cup Y$ of \textit{G} with the maximum number of edges. By Corollary \ref{p2}, \( E_X \) and \( E_Y \) are two matchings in \( G \). By Proposition \ref{p1}, each bad vertex has degree at least two in $G$. Consequently, we define \( E_1(B) \) to contain each edge in  \( E_X \cup E_Y \) where at least one of its ends is a $2$-vertex in $G$, and \( E_2(B) = (E_X \cup E_Y) \setminus E_1(B) \); that is, both ends of any edge in $E_2(B)$ are $3$-vertices in $G$. Assume $B$ is chosen such that $|E_1(B)|\geq |E_1(B')|$ for each spanning bipartite subgraphs $B'$ of \textit{G} with the maximum number of edges. Now, define $T$ as the set containing exactly one bad $2$-vertex of each edge in $E_1(B)$ and exactly one end of each edge in $E_2(B)$.
 \begin{claim}\label{c2}
      Let $u,v\in T$, then $d_G(u, v) \geq 3$.
 \end{claim} 
  \noindent \textit{Proof.}
  \noindent Let $u,v\in T$. By the definition of \( T \), $u$ and $v$ are unlinked. Then, there exist \( u_1, v_1 \in V(G) \) such that \( uu_1, vv_1 \in E_X \cup E_Y \). Suppose that \( u \) and \( v \) belong to different partite sets. We have \( uv, uv_1, vu_1 \notin E(G) \). Indeed, suppose to the contrary and without loss of generality that \( uv \in E(G) \). Then, by Lemma \ref{l1}, both $u$ and $v$ are $3$-vertices in $G$. Consequently, by the definition of \( T \), both $u_1$ and $v_1$ are $3$-vertices in $G$, a contradiction as \( G \) is a $1$-saturated subcubic graph. Suppose now that both \( u \) and \( v \) are in the same partite set, either \( X \) or \( Y \). Without loss of generality, assume \( u, v \in X \). By the definition of \( T \), \( d_G(u, v)> 1 \). Assume \( d_G(u, v) = 2 \). Then, by Corollary \ref{p2}, there exists a vertex \( y \in Y \) such that \( uy, yv \in E(G) \). By Lemma \ref{l2}, either \( u \) or \( v \) is a $3$-vertex in $G$. Suppose, without loss of generality, that $u$ is a $3$-vertex in $G$. By the definition of \( T \), $u_1$ must be a $3$-vertex in $G$. Since \( G \) is a $1$-saturated subcubic graph, $y$ is a $2$-vertex in $G$. Thus, by Lemma \ref{l2}, \( v \) is a $3$-vertex in $G$. By the definition of \( T \), $v_1$ must be a $3$-vertex in $G$. Now, let $u'$ (respectively $v'$) be the third neighbor of $u$ (respectively $v$) distinct from both $u_1$ (respectively $v_1$) and $y$. By Corollary \ref{p2}, \(u', v'\in Y \). Since $G$ is a $1$-saturated subcubic graph, both $u'$ and $v'$ are $2$-vertices in $G$. Now, define $B'=X'\cup Y'$ as a spanning bipartite subgraph of $G$, where $X'=(X\setminus\{u,v\})\cup \{y\}$ and $Y'=(Y\setminus\{y\})\cup \{u,v\}$. We have $ |E(B')| = |(E(B)\setminus \{uu',vv'\})\cup \{uu_1,vv_1\}|= |E(B)|$. Therefore, \( B' \) is a spanning bipartite subgraph of \( G \) with the maximum number of edges. However, $E_1(B')=E_1(B) \cup \{uu_1, vv_1\}$, which contradicts the maximality of $E_1(B)$.\hfill $\Box$\par
\noindent By Claim \ref{c2}, we can color each vertex in \( T \) with 2. Moreover, by the construction of \( T \) and Corollary \ref{p2}, the subgraph of \( G \) induced by the remaining uncolored vertices is a bipartite graph. Therefore, \( G \) is \( (1,1,2) \)-packing colorable.
\end{proof}

\end{document}